\newtheorem{theorem}{Theorem}[section]
\newtheorem{corollary}[theorem]{Corollary}
\newtheorem{proposition}[theorem]{Proposition}
\theoremstyle{definition}
\newtheorem*{definition}{Definition}
\newtheorem{example}[theorem]{Example}
\newtheorem{remark}[theorem]{Remark}
\newcommand{\Z}{\mathds{Z}}
\renewcommand{\sp}{\mathrm{sp}}
\newcommand{\eps}{\varepsilon}
\newenvironment{romanlist}
	{\begin{enumerate}
	}
	{\end{enumerate}}
\DeclareSymbolFont{EulerScript}{U}{eus}{m}{n}
\DeclareSymbolFontAlphabet\mathscr{EulerScript}
\begin{document}

\title{Splitting numbers and signatures}

\author{David Cimasoni}
\address{Universit\'e de Gen\`eve, Section de math\'ematiques, 2-4 rue du Li\`evre, 1211 Gen\`eve 4, Switzerland}
\email{david.cimasoni@unige.ch}
\author{Anthony Conway }
\address{Universit\'e de Gen\`eve, Section de math\'ematiques, 2-4 rue du Li\`evre, 1211 Gen\`eve 4, Switzerland}
\email{anthony.conway@unige.ch}
\author{Kleopatra Zacharova}
\address{Universit\'e de Gen\`eve, Section de math\'ematiques, 2-4 rue du Li\`evre, 1211 Gen\`eve 4, Switzerland}
\email{kleopatra.zacharova@etu.unige.ch}

\subjclass[2000]{57M25}

\begin{abstract}
The splitting number of a link is the minimal number of crossing changes between different components required to convert it into a split link.
We obtain a lower bound on the splitting number in terms of the (multivariable) signature and nullity.
Although very elementary and easy to compute, this bound turns out to be suprisingly efficient. In particular, it makes it a routine check to recover the splitting number of 129 out of the 130 prime links with at most 9 crossings. Also, we easily determine 16 of the 17 splitting numbers that were studied by Batson and Seed using
Khovanov homology, and later computed by Cha, Friedl and Powell using a variety of techniques. Finally, we determine the splitting number of a large class of 2-bridge links
which includes examples recently computed by Borodzik and Gorsky using a Heegaard Floer theoretical criterion.
\end{abstract}

\maketitle

\section{Introduction}

Any link~$L=L_1\cup\dots\cup L_\mu$ in~$S^3$ can be turned into the split union of its components by a sequence of crossing changes between different components.
Following Batson and Seed~\cite{BS} and Cha, Friedl and Powell~\cite{CFP}, we call {\em splitting number\/} of~$L$ the minimal number of crossing changes in such
a sequence, and denote it by~$\sp(L)$. (Note that the same terminology is used in~\cite{Adams,Sh,Lackenby} for another invariant.) Obviously, upper bounds on~$\sp(L)$ can be found by inspection of diagrams, so the difficulty in computing it is to find lower bounds.

As observed in~\cite{BS} (see also~\cite[Lemma 2.1]{CFP}), the linking numbers provide an elementary such bound.
First note that~$\sp(L)$ has the same parity as the total linking number~$\sum_{i<j}\ell k(L_i,L_j)$. Furthermore, given a two component
link~$L_i\cup L_j$, let~$b_{\ell k}(L_i,L_j)$ be equal to~$0$ if~$L_i\cup L_j$ is split, to~$2$ if it is non-split but~$\ell k(L_i,L_j)$ vanishes,
and to~$|\ell k(L_i,L_j)|$ otherwise. Then, one easily shows that
\[
\sum_{i<j}b_{\ell k}(L_i,L_j)\le \sp(L)\,.
\]
We shall call this bound the {\em linking number bound\/}.

In the same article, Batson and Seed defined a spectral sequence from the Khovanov homology of a link converging to the Khovanov homology of the corresponding
split link, and used it to obtain a new lower bound on~$\sp(L)$. Testing it on links up to 12 crossings, they found 17 examples where this
{\em Batson-Seed bound\/} is strictly stronger than the linking number bound. This enabled them to compute the splitting number of 7 of these links, while the
remaining ones were left undetermined.

In~\cite{CFP}, Cha, Friedl and Powell introduced two new techniques for computing splitting numbers. The first one, based on
{\em covering link calculus\/}, relies on the following observation. If a link~$L$ has an unknotted component~$L_i$ and can be split by~$\alpha+\beta$
crossing changes,~$\alpha$ involving~$L_i$ and~$\beta$ not involving~$L_i$, then the preimage of~$L\setminus L_i$ in the~$2$-fold cover of~$L$ branched over~$L_i$
bounds a smooth surface in the~$4$-ball whose Euler characteristic can be computed from~$\alpha$,~$\beta$, and the slice genus of the components of~$L$.
(See~\cite[Theorem 3.2]{CFP} for the precise statement.) This technique turns out to be very efficient. However, it can only be used on a case by case basis, as different links
often require specific arguments (see in particular~\cite[Section 5.3]{CFP}). Furthermore, proving that a link does not bound a given surface in the~$4$-ball
(or more generally, in a rational homology~$4$-ball in case~$L_i$ is knotted) is by no means a trivial task.
Therefore the authors often need very powerful tools and heavy computations to conclude, such as results of Casson-Gordon~\cite{CG1,CG2}, the Rasmussen s-invariant~\cite{Ras},
or twisted Alexander polynomials~\cite{HKL}.

Their second result, originally stated as~\cite[Theorem 4.2]{CFP}, was then strengthened by Borodzik, Friedl and Powell in~\cite[Corollary 4.3]{BFP}
(see also~\cite[Theorem~1.1]{Kaw}). It can be stated as follows. If the multivariable Alexander polynomial~$\Delta_L$ does not vanish, then~$\sp(L)\ge \mu-1$ and if~$\sp(L)=\mu-1$, then
\[
\Delta_L(t_1,\dots,t_\mu)=\pm\prod_{i=1}^\mu\Delta_{L_i}(t_i)\cdot p(t_1,\dots,t_\mu)\cdot p(t^{-1}_1,\dots,t^{-1}_\mu)\cdot\prod_{i=1}^\mu t_i^{r_i}\cdot\prod_{i=1}^\mu(1-t_i)^{s_i}
\]
for some~$p\in\Z[t_1^{\pm 1},\dots,t_\mu^{\pm 1}]$ and~$r_i,s_i\in\Z$. This {\em Alexander polynomial obstruction\/} is powerful in showing that the splitting number of a~$\mu$-component link is strictly greater
than~$\mu-1$, but it obviously cannot be used in other cases.

These two techniques (together with the linking number bound) allowed these authors to determine the splitting numbers of the 130 prime links with up to 9 crossings. To be more precise, for 7 of these links, Cha, Friedl and Powell relied on results of Kohn~\cite{K1} on the unlinking number to exclude the value~$\sp(L)=1$; nevertheless, these values can be recovered using the Alexander polynomial obstruction of~\cite{BFP}. Furthermore, these techniques were harnessed to compute the splitting numbers of all of the 17 links in the Batson-Seed list.

Finally, in the very recent preprint~\cite{BG}, Borodzik and Gorsky found a Heegaard Floer theoretical criterion for bounding the splitting number. As an application, they showed
that for any positive~$a$, the~$2$-bridge link with Conway normal form~$C(2a,1,2a)$ has splitting number~$2a$, even though the linking number of the two components vanishes.

In the present paper, we give a new lower bound on the splitting number of a link in terms of its multivariable signature and nullity, see Theorem~\ref{thm:splitting} below.
As a special case, we obtain a bound in terms of the (one-variable) Levine-Tristram signature and nullity (Corollary \ref{cor:splitting}).
Although very elementary and extremely easy to compute (see e.g.~\cite{CC}), these bounds turn out to be remarkably powerful. Indeed, our bound is sharp for 127 out of the 130 prime
links with up to 9 crossings, and two of the remaining splitting numbers can be determined with the linking number bound. Also, our method
easily gives the splitting number of all but one of the 17 links in the Batson-Seed list.
Finally, our bound easily implies the following generalization of~\cite[Theorem~7.12]{BG}: for any~$n\ge 1$ and positive~$a_1,\dots,a_n,b_1,\dots,b_{n-1}$, the splitting number
of the~$2$-bridge link with Conway normal form~$C(2a_1,b_1,2a_2,b_2,\dots,2a_{n-1},b_{n-1},2a_n)$ is equal to~$a_1+\dots+a_n$ (Theorem~\ref{thm:2bridge}).

Let us finally mention that our results hold for the computation of the splitting number of a {\em colored link\/}. In other words, our bound holds if~$L=L_1\cup\dots\cup L_\mu$ denotes a partition of~$L$ into sublinks, and~$\sp(L)$ is the minimal number of crossing  changes between components of different sublinks to obtain the split union of these links.

\medskip

This paper is organized as follows. In Section~2, we recall the definitions of the multivariable signature and nullity, together with a list of their properties.
Section~3 contains our main result, its proof, as well as corollaries and remarks. Finally, Section~4 gathers several examples and applications.

\subsection*{Acknowledgments.} The authors wish to thank Maciej Borodzik, Jae Choon Cha, Matthias Nagel, Sebastien Ott and Mark Powell for useful discussions.
The first-named author was supported by the Swiss National Science Foundation. The second-named author was supported by the NCCR SwissMAP, funded by the Swiss National Science Foundation.

\section{Multivariable signatures}
\label{sec:sign}

Recall that the {\em Levine-Tristram signature\/} \cite{Mur,Lev,Tri} of an oriented link~$L$ is the map
\[
\sigma_L \colon S^1\setminus\{1\}\to\Z \,,
\]
where~$\sigma_L(\omega)$ is the signature of the Hermitian matrix~$H(\omega)=(1-\omega)A+(1-\overline\omega)A^T$ and~$A$
is any Seifert matrix for the oriented link~$L$. The {\em nullity\/} of~$L$ is the map~$\eta_L\colon S^1\setminus\{1\}\to\mathbb{N}$ obtained by considering the nullity
of~$H(\omega)$. These invariants were extended to colored links using~$C$-complexes in~\cite{CF} (see references therein for previous versions of these extensions).
We now briefly recall this construction.

A~$\mu$-{\em colored link\/} is an oriented link~$L$ in~$S^3$ whose components are partitioned into~$\mu$ sublinks~$L_1\cup\dots\cup L_\mu$. Thus, a~$1$-colored link is simply
an oriented link, while a~$\mu$-component $\mu$-colored link is nothing but an ordered link.
A {\em~$C$-complex\/}~\cite{Cooper} for a~$\mu$-colored link~$L=L_1\cup\dots\cup L_\mu$ is a union~$S=S_1\cup\dots\cup S_\mu$ of surfaces in~$S^3$ which is connected, and such that:
\begin{romanlist}
\item{for all~$i$,~$S_i$ is a  Seifert surface for the sublink~$L_i$;}
\item{for all~$i\neq j$,~$S_i\cap S_j$ is either empty or a union of clasps (see Figure \ref{fig:clasp});}
\item{for all~$i,j,k$ pairwise distinct,~$S_i\cap S_j\cap S_k$ is empty.}
\end{romanlist}

The existence of a~$C$-complex for an arbitrary colored link is fairly easy to establish, see~\cite[Lemma 1]{CimConway}.
Note that in the case~$\mu=1$, a~$C$-complex for the~$1$-colored link~$L$ is nothing but a connected Seifert surface for the oriented link~$L$.
Let us now define the corresponding generalization of the Seifert form.  

\begin{figure}[Htb]
\labellist\small\hair 2.5pt
\pinlabel {$x$} at 18 58
\pinlabel {$S_i$} at 94 110
\pinlabel {$S_j$} at 331 118
\endlabellist
\centering
\includegraphics[width=0.4\textwidth]{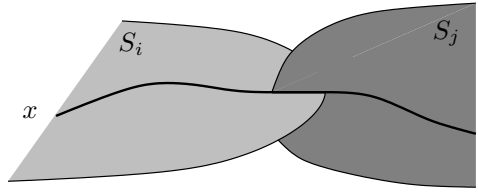}
\caption{A clasp intersection crossed by a~$1$-cycle~$x$.}
\label{fig:clasp}
\end{figure}

Given a sequence~$\eps=(\eps_1,\dots,\eps_\mu)$ of signs~$\pm 1$, let~$i^\eps\colon H_1(S)\to H_1(S^3\setminus S)$ be defined as follows. Any homology class in~$H_1(S)$ can be represented by an oriented cycle~$x$ which behaves as
illustrated in Figure~\ref{fig:clasp} whenever crossing a clasp. Then, define~$i^\eps([x])$ as
the class of the~$1$-cycle obtained by pushing~$x$ in the~$\eps_i$-normal direction off~$S_i$ for~$i=1,\dots,\mu$, and consider the bilinear form
\[
\alpha^\eps\colon H_1(S)\times H_1(S)\to\Z,\quad(x,y)\mapsto\ell k(i^\eps(x),y)\,,
\]
where~$\ell k$ denotes the linking number.
Fix a basis of~$H_1(S)$ and denote by~$A^\eps$ the matrix of~$\alpha^\eps$. Note that for all~$\eps$,~$A^{-\eps}$ is equal to~$(A^\eps)^T$. Using this fact, one easily checks that for any~$\mathbf{\omega}=(\omega_1,\dots,\omega_\mu)$ in the~$\mu$-dimensional 
torus~$\mathbb{T}^\mu$, the matrix
\[
H(\omega)=\sum_\eps\prod_{i=1}^\mu(1-\overline{\omega}_i^{\eps_i})\,A^\eps
\]
is Hermitian. Since this matrix vanishes when one of the coordinates of~$\omega$ is equal to~$1$, we restrict ourselves to the subset~$\mathbb{T}_*^\mu=(S^1\setminus\{1\})^\mu$
of~$\mathbb{T}^\mu$.

\begin{definition}
The {\em multivariable signature and nullity} of the~$\mu$-colored link~$L$ are the functions
\[
\sigma_L,\eta_L \colon\mathbb{T}_*^\mu\to\Z \,,
\]
where~$\sigma_L(\omega)$ is the signature of the Hermitian matrix~$H(\omega)$ and~$\eta_L(\omega)$ its nullity.
\end{definition}

Note that in the case~$\mu=1$, one clearly recovers the Levine-Tristram signature and nullity.
This multivariable generalization not only turns out to be well-defined (i.e. independent of the choice of the~$C$-complex), but it also satisfies
all the properties of the Levine-Tristram invariants, generalized from oriented links to colored links (see~\cite{CF} and Proposition~\ref{prop:properties} below).
Let us first illustrate this definition with an example.

\begin{example}
\label{ex:2bridge}
Figure~\ref{fig:2bridge} shows the~$2$-bridge link~$L$ with Conway normal form~$C(4,3,2)$, together with a natural~$C$-complex~$S$ for it. A natural basis for~$H_1(S)$ is given by
the cycles~$\alpha,\beta$, which are also depicted on this figure. With respect to this basis, one gets~$A^{++}=\left[\begin{smallmatrix}0&\phantom{-}0\\0&-2\end{smallmatrix}\right]$
and~$A^{+-}=\left[\begin{smallmatrix}-1&\phantom{-}1\\ \phantom{-}0&-1\end{smallmatrix}\right]$, leading in particular
to~$H(-1,-1)=4\left[\begin{smallmatrix}-2&\phantom{-}1\\\phantom{-}1&-6\end{smallmatrix}\right]$. Therefore, we have~$\sigma_L(-1,-1)=-2$.
\end{example}

\begin{figure}[Htb]
\labellist\small\hair 2.5pt
\pinlabel {$\alpha$} at 815 229
\pinlabel {$\beta$} at 1337 231
\endlabellist
\centerline{\psfig{file=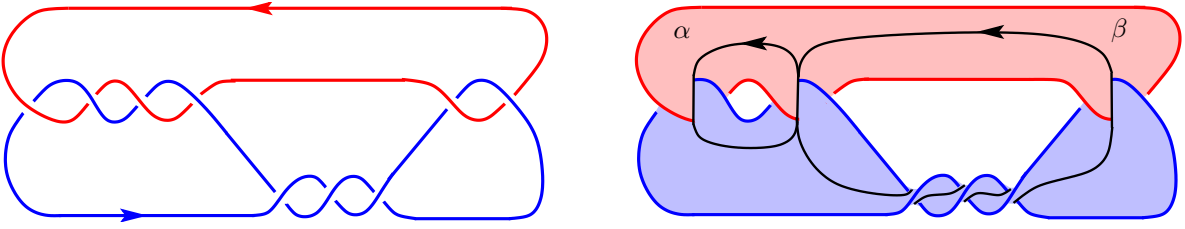,width=\textwidth}}
\caption{The link~$C(4,3,2)$, and a natural~$C$-complex for it.}
\label{fig:2bridge}
\end{figure}

Let us now collect some of the basic properties of these invariants, referring to Propositions~2.5,~2.8 and~2.13 of~\cite{CF} for the easy proofs.

\begin{proposition}
\label{prop:properties}~
\begin{romanlist}
\item Consider the~$(\mu+\mu')$-colored link~$L\sqcup L'$ given by the split union of the~$\mu$ and~$\mu'$-colored links~$L$ and~$L'$. Then, for all~$\omega\in\mathbb{T}^\mu_*$
and~$\omega'\in\mathbb{T}^{\mu'}_*$, 
\[
\sigma_{L\sqcup L'}(\omega,\omega')=\sigma_{L}(\omega)+\sigma_{L'}(\omega')\quad\text{and}\quad\eta_{L\sqcup L'}(\omega,\omega')=\eta_{L}(\omega)+\eta_{L'}(\omega') +1\,.
\]
\item Given any colored link~$L=L_1\cup\dots\cup L_\mu$, the Levine-Tristram signature and nullity of the underlying oriented link can be recovered from their multivariable analogue as
follows: for all~$\omega\in S^1\setminus\{1\}$,
\[
\sigma_L(\omega)=\sigma_L(\omega,\dots,\omega)-\sum_{i<j}\ell k(L_i,L_j)\quad\text{and}\quad\eta_L(\omega)=\eta_L(\omega,\dots,\omega)\,.
\]
\item Let~$L=L_1\cup\dots\cup L_\mu$ be a colored link, and let~$L'$ be the colored link obtained from~$L$ by reversing the orientation of every component of the sublink~$L_1$.
Then, for all~$(\omega_1,\dots,\omega_\mu)\in\mathbb{T}^\mu_*$,
\[
\sigma_{L'}(\omega_1,\dots,\omega_\mu)=\sigma_{L}(\omega^{-1}_1,\omega_2,\dots,\omega_\mu)\quad\text{and}\quad\eta_{L'}(\omega_1,\dots,\omega_\mu)=\eta_{L}(\omega^{-1}_1,\omega_2,\dots,\omega_\mu)\,.\qed
\]
\end{romanlist}
\end{proposition}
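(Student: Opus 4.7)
The three assertions are all matrix-theoretic consequences of the $C$-complex definition of $\sigma_L$ and $\eta_L$, and each can be proved by constructing an explicit $C$-complex in the relevant setting and reading off the matrices $A^\eps$. I would tackle the three parts independently. The main obstacle is part (ii), where one must compare the Hermitian form coming from a $C$-complex (with its clasps) to the Hermitian form coming from an honest Seifert surface, tracking both the signature shift and the fact that the nullities coincide.

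For (i), start with $C$-complexes $S$ for $L$ and $S'$ for $L'$ realized in disjoint $3$-balls, and connect them by a small band $b$ running, say, between $S_1$ and $S_1'$. The resulting surface $\widetilde S = S \cup b \cup S'$ is a $C$-complex for the split colored link $L \sqcup L'$. A natural basis for $H_1(\widetilde S)$ consists of bases for $H_1(S)$ and $H_1(S')$ together with one extra ``dual'' cycle $c$ crossing $b$ once. Because $S$, $b$ and $S'$ can be chosen to lie in disjoint regions, every linking number between $i^{(\eps,\eps')}$-pushoffs of cycles in one summand and cycles in the other summand vanishes, as does the linking between pushoffs of $c$ and any other basis element. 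Hence each matrix $A^{(\eps,\eps')}$ is block-diagonal of the form $A^\eps \oplus A^{\eps'} \oplus (0)$, and so
\[
H(\omega,\omega') = H_L(\omega) \oplus H_{L'}(\omega') \oplus (0).
\]
Summing signatures and nullities gives the stated formulas, the extra $+1$ in the nullity coming from the trivial $1\times 1$ block.

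For (ii), choose a $C$-complex $S = S_1 \cup \dots \cup S_\mu$ for $L$ and produce from it a connected Seifert surface $F$ for the underlying oriented link by performing the oriented resolution at every clasp. The two surfaces are related cleanly enough that a basis of $H_1(S)$ can be extended to a basis of $H_1(F)$ by adjoining one small ``clasp cycle'' per clasp. Specializing the multivariable matrix at $\omega_1 = \dots = \omega_\mu = \omega$ and comparing with the single-variable Hermitian form $(1-\omega)V+(1-\overline\omega)V^T$ coming from a Seifert matrix $V$ of $F$, one finds that $H(\omega,\dots,\omega)$ and $(1-\omega)V+(1-\overline\omega)V^T$ differ by orthogonal $2\times 2$ blocks, one per clasp, whose signatures add up precisely to $\sum_{i<j} \ell k(L_i,L_j)$ while their nullities contribute zero. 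The core of the argument is this clasp-by-clasp bookkeeping; I would treat a single clasp modification explicitly and iterate by induction on the number of clasps.

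For (iii), observe that the same underlying unoriented surface $S$ serves as a $C$-complex for both $L$ and $L'$. Reversing the orientation of the sublink $L_1$ flips the positive normal direction to $S_1$, so the pushoff map $i^\eps$ associated to $L'$ coincides with the pushoff map $i^{\eps'}$ associated to $L$ where $\eps'$ differs from $\eps$ only in the first coordinate. Consequently,
\[
A^{(\eps_1,\eps_2,\dots,\eps_\mu)}_{L'} = A^{(-\eps_1,\eps_2,\dots,\eps_\mu)}_{L}.
\]
Substituting this into the defining sum and reindexing $\eps_1 \mapsto -\eps_1$ replaces each factor $1-\overline\omega_1^{\eps_1}$ by $1-\omega_1^{\eps_1} = 1 - \overline{\omega_1^{-1}}^{\,\eps_1}$, so $H_{L'}(\omega_1,\dots,\omega_\mu) = H_L(\omega_1^{-1},\omega_2,\dots,\omega_\mu)$, and the equalities for $\sigma$ and $\eta$ follow immediately.
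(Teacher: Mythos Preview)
The paper does not give its own proof of this proposition; it simply refers to Propositions~2.5, 2.8 and~2.13 of~\cite{CF}. Your overall strategy---build an explicit $C$-complex in each situation and read off the generalized Seifert matrices---is exactly the approach taken there, and your sketches for parts~(ii) and~(iii) are correct.

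Part~(i), however, has a gap in the construction. You cannot connect $S_1$ and $S_1'$ by a band: in the $(\mu+\mu')$-colored link $L\sqcup L'$ these two surfaces carry \emph{different} colors (color~$1$ and color~$\mu+1$), so a $1$-handle joining them would merge two Seifert surfaces of distinct colors into one, violating the $C$-complex axioms. The way to connect the two $C$-complexes while respecting the coloring is to introduce \emph{clasps} between some $S_i$ and some $S_j'$. Moreover, a single clasp is not enough: the signed number of clasps between $S_i$ and $S_j'$ must equal $\ell k(L_i,L_j')$, which vanishes for a split link, so one needs a pair of clasps of opposite sign. Two such clasps make the union connected and add exactly one generator $c$ to $H_1$ (the cycle running through both clasps); one then checks that the corresponding row and column of $H(\omega,\omega')$ vanish, yielding the extra $+1$ in the nullity.

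A second, more minor point in~(i): even with the correct construction, the $H_1(S)\times H_1(S)$ block of $H(\omega,\omega')$ is not literally $H_L(\omega)$ but rather $\Big(\prod_j\big(2-2\,\mathrm{Re}\,\omega_j'\big)\Big)H_L(\omega)$, because summing over the unused sign variables $\eps'$ produces this scalar factor. Since the factor is real and positive on $\mathbb{T}^{\mu'}_*$, the signature and nullity are unaffected, so the conclusion stands.
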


We now recall a deeper result, that requires some terminology. Let~$L=L_1 \cup \dots \cup L_\mu$ be a colored link and let~$X_L$ denote its exterior. The
epimorphism~$\pi_1(X_L) \rightarrow \mathbb{Z}^\mu$ given by~$\gamma \mapsto (\ell k(\gamma,L_1),\dots,\ell k(\gamma,L_\mu))$ induces a
regular~$\mathbb{Z}^\mu$-covering~$\widetilde{X}_L \rightarrow X_L$. The homology of~$\widetilde{X}_L$ is naturally a module over~$\Lambda=\mathbb{Z}[t_1^{\pm 1},\dots,t_\mu^{\pm 1}]$, where~$t_i$ denotes the covering transformation corresponding to an oriented meridian of~$L_i$. The~$\Lambda$-module~$H_1(\widetilde{X}_L)$ is called the {\em Alexander module\/} of the colored link~$L$. We shall denote its rank by~$\beta(L)$.

For~$r\ge 0$, let~$\Delta^{(r)}_L\in\Lambda$ denote the greatest common divisor of all~$(m-r) \times (m-r)$ minors of an~$m\times n$ presentation matrix of~$H_1(\widetilde{X}_L)$. This polynomial, well-defined up to units of~$\Lambda$, is called the~{\em $r^{\mathrm{th}}$-Alexander polynomial\/} of~$L$.
Finally,~$\Delta^{(0)}_L$ is simply called the {\em Alexander polynomial\/} of~$L$, and denoted by~$\Delta_L$. Note that the first non-vanishing Alexander polynomial
is~$\Delta^{(\beta(L))}_L$, that we shall denote by~$\Delta^{\mathrm{tor}}_L$ for obvious reasons.

\begin{proposition}~
\label{prop:nullity}
\begin{romanlist}
\item
The rank of the Alexander module of a colored link is the minimal value of its nullity:~$\beta(L)=\min \lbrace \eta_L(\omega) \ | \ \omega \in \mathbb{T}_*^\mu \rbrace$.
\item The signature is constant on the connected components of the complement in~$\mathbb{T}_*^\mu$ of the zeroes of the Alexander polynomial.
\end{romanlist}
\end{proposition}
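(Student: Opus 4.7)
The plan is to derive both parts from one structural input: the $\Lambda$-matrix obtained from $H(\omega)$ by replacing each~$\omega_i$ by the corresponding covering variable~$t_i$ is, up to $\Lambda$-equivalence, a presentation matrix of the Alexander module $H_1(\widetilde{X}_L)$. This identification is not immediate from our definition of $H(\omega)$, but it is carried out in~\cite{CF} via a Mayer--Vietoris analysis of a regular neighborhood of the $C$-complex~$S$ inside $\widetilde{X}_L$, the clasps of $S$ contributing extra cells that must be tracked carefully. This is really the only non-trivial ingredient, and I would quote it rather than re-derive it; the rest of the argument reduces to linear algebra over~$\Lambda$.

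For part (i), let $m$ denote the size of $H(t)$ and let $\rho$ denote its generic rank, that is, its rank over the fraction field of~$\Lambda$, so that $\beta(L)=m-\rho$. The rank of a matrix-valued Laurent polynomial is lower semicontinuous, hence $\mathrm{rank}_\mathbb{C} H(\omega)\le\rho$ for every $\omega\in\mathbb{T}_*^\mu$; this is exactly the inequality $\eta_L(\omega)\ge\beta(L)$. Equality holds at any~$\omega$ where some $\rho\times\rho$ minor of~$H(t)$ fails to vanish. The gcd of these minors coincides (up to units) with the first non-vanishing Alexander polynomial~$\Delta^{\mathrm{tor}}_L$, so the minimum $\eta_L(\omega)=\beta(L)$ is attained on the dense open complement of $\{\Delta^{\mathrm{tor}}_L=0\}$, proving (i).

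For part (ii), the eigenvalues of the Hermitian matrix $H(\omega)$ depend continuously on its entries, which are themselves Laurent polynomials in~$\omega$. The signature $\sigma_L(\omega)$ can therefore change only at points where an eigenvalue of $H(\omega)$ crosses zero, i.e.\ at points where $\mathrm{rank}_\mathbb{C} H(\omega)<\rho$. By the argument above, this locus coincides with the vanishing set of~$\Delta^{\mathrm{tor}}_L$. The chain of Fitting ideals $F_0\subset F_1\subset\cdots$ of the Alexander module yields $\Delta^{\mathrm{tor}}_L \mid \Delta_L$ whenever $\Delta_L\not\equiv 0$, so the jump locus is contained in $\{\Delta_L=0\}$; the statement about connected components of the complement follows, and is vacuous when $\Delta_L\equiv 0$. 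The main obstacle, as indicated, is the presentation result imported from~\cite{CF}; once it is available, both parts are essentially formal.
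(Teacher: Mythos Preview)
Your approach is correct and runs parallel to the paper's, which likewise outsources the substantive work to~\cite{CF}: the paper quotes~\cite[Theorem~4.1]{CF} (that $\eta_L(\omega)=r$ exactly when $\omega$ lies in $\Sigma_r\setminus\Sigma_{r+1}$, the $\Sigma_r$ being defined via the elementary ideals of the Alexander module) for part~(i) and~\cite[Corollary~4.2]{CF} for part~(ii), whereas you invoke the more primitive presentation-matrix fact from~\cite{CF} and rederive those two consequences by hand. The difference is therefore one of packaging rather than of strategy; your version is more self-contained, the paper's more economical.

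One caution on your key input: the claim that $H(t)$ is ``up to $\Lambda$-equivalence'' a presentation matrix of $H_1(\widetilde X_L)$ is not literally what holds. Already for $\mu=1$ one has $H(t)=\frac{1-t}{t}\,(tA-A^T)$, and the scalar $(1-t)/t$ is not a unit of $\Lambda$, so $H(t)$ and $tA-A^T$ present different modules and have different Fitting ideals. What is true---and is what \cite{CF} actually establishes en route to Theorem~4.1 there---is that $H(t)$ agrees with a square presentation matrix up to a scalar Laurent-polynomial factor that is nonvanishing on~$\mathbb{T}_*^\mu$. That suffices for your purposes, since both the generic rank over $\mathrm{Frac}(\Lambda)$ and the pointwise rank at any $\omega\in\mathbb{T}_*^\mu$ are insensitive to such a factor, and likewise the vanishing loci of the minors on $\mathbb{T}_*^\mu$ are unchanged. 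With the imported statement phrased this way your argument goes through verbatim; the paper's route via~\cite[Theorem~4.1]{CF} simply sidesteps the subtlety by quoting the conclusion rather than the mechanism.
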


\begin{proof}
To show the first point, let~$E_{r}(L)$ denote the ideal of~$\Lambda$ generated by the~$(m-r) \times (m-r)$ minors of an~$m\times n$ presentation matrix
of~$H_1(\widetilde{X}_L)$. Also, let~$\Sigma_r$ denote the set consisting of all~$\omega\in\mathbb{T}^\mu_*$ such that~$p(\omega)=0$ for each~$p\in E_{r-1}(L)$.
Observe that the sets~$\Sigma_r$ form a decreasing sequence. By~\cite[Theorem $4.1$]{CF},~$\Sigma_r \setminus \Sigma_{r+1}$ consists of all~$\omega$ such
that~$\eta_L(\omega)=r$. Therefore, if~$\beta$ denotes the minimal value of~$\eta_L$, we have~$\Sigma_r=\mathbb{T}^\mu_*$ for all~$r\le\beta$
and~$\Sigma_{\beta+1}\neq\mathbb{T}^\mu_*$. Hence,~$\beta$ is equal to the maximal~$r$ such that $E_{r-1}(L)=0$, which is nothing but the rank of the Alexander module.
The second point is precisely~\cite[Corollary~4.2]{CF}.
\end{proof}

\section{Main results}
\label{sec:proof}

The {\em splitting number} $\sp(L)$ of a colored link~$L=L_1\cup\dots\cup L_\mu$ is the minimal number of crossing changes between sublinks of different colors required to
turn~$L$ into the corresponding split colored link~$L_1\sqcup\dots\sqcup L_\mu$. If~$\mu$ is equal to the number of components, which is the case to keep in mind,
one recovers the splitting number discussed in the introduction. Our main result is the following inequality.

\begin{theorem}
\label{thm:splitting}
If~$L=L_1 \cup \dots \cup L_\mu$ is a colored link, then
\[
\Big|\sigma_L(\omega_1,\dots,\omega_\mu)-\sum_{i=1}^\mu \sigma_{L_i}(\omega_i)\Big|+\Big| \mu-1-\eta_L(\omega_1,\dots,\omega_\mu)+\sum_{i=1}^\mu\eta_{L_i}(\omega_i)\Big|\leq\sp(L)
\]
for all~$(\omega_1,\dots,\omega_\mu)\in\mathbb{T}_*^\mu $. 
\end{theorem}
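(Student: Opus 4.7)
\emph{Plan.} By Proposition~\ref{prop:properties}(i), the inequality is equivalent to
\[
\bigl|\sigma_L(\omega)-\sigma_{L^\sqcup}(\omega)\bigr|+\bigl|\eta_L(\omega)-\eta_{L^\sqcup}(\omega)\bigr|\le \sp(L),
\]
where $L^\sqcup=L_1\sqcup\dots\sqcup L_\mu$ is the split colored link. I would telescope along a minimal sequence $L=L^{(0)}\to L^{(1)}\to\dots\to L^{(n)}=L^\sqcup$ of splitting crossing changes ($n=\sp(L)$); by the triangle inequality, the theorem reduces to the following crossing-change lemma: if $L$ and $L'$ are $\mu$-colored links differing by a single crossing change between strands of different colors, then
\[
\bigl|\sigma_L(\omega)-\sigma_{L'}(\omega)\bigr|+\bigl|\eta_L(\omega)-\eta_{L'}(\omega)\bigr|\le 1 \quad\text{for every } \omega\in\mathbb{T}^\mu_*.
\]

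For the crossing-change lemma, my strategy is to exhibit the Hermitian form of $L'$ as a codimension-one principal submatrix of that of $L$ and invoke Cauchy interlacing. I would construct $C$-complexes $S$ for $L$ and $S'$ for $L'$ that agree outside a small ball $B$ around the crossing site, and such that inside $B$ the complex $S$ contains exactly one clasp while $S'$ contains none. This should be possible because the two strands are disjoint in $S^3$ (the crossing is only a diagrammatic feature), so the two colored surfaces can be drawn locally either to meet in a single clasp (capturing the over/under crossing of $L$) or to pass by without meeting (for $L'$). Extending a basis of $H_1(S')$ by one extra cycle $\gamma$ that passes once through the new clasp yields a basis of $H_1(S)$, and a direct check shows that each $A^\eps_L$ has $A^\eps_{L'}$ as its $m\times m$ upper-left principal submatrix: for cycles supported outside $B$, the pushoffs $i^\eps$ and all relevant linking numbers are the same in $S$ and $S'$. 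Consequently $H_{L'}(\omega)$ is a principal submatrix of $H_L(\omega)$ of codimension one. Cauchy's interlacing inequality then forces the positive and negative eigenvalue counts $p,n$ to satisfy $\Delta p,\Delta n\in\{0,1\}$, while the total dimension increases by $1$, so that
\[
(\Delta p,\Delta n,\Delta z)\in\{(1,0,0),(0,1,0),(0,0,1),(1,1,-1)\}.
\]
In each of these four cases, $|\Delta\sigma|+|\Delta\eta|=1$, yielding the desired bound.

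\textbf{Main obstacle.} The technical heart of the argument is the local construction of the $C$-complexes $S$ and $S'$ related by a single clasp addition at the crossing site. Concretely, I need to verify that the flexibility of the $C$-complex construction (in the spirit of~\cite{CF,CimConway}) permits absorbing a crossing between strands of different colored components into, or extruding it from, a $C$-complex via a surgery supported in an arbitrarily small neighborhood of the crossing, without affecting the remainder of the complex. Once this local move is in place and the matching of $A^\eps_L$ with $A^\eps_{L'}$ on the $m\times m$ block is established, the interlacing argument and the ensuing case analysis go through essentially automatically.
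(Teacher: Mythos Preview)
Your proposal is correct and follows essentially the same route as the paper: telescope along a minimal splitting sequence, reduce to a crossing-change lemma, and prove the lemma by exhibiting one Hermitian matrix as a codimension-one principal submatrix of the other. The local clasp modification you flag as the main obstacle is precisely what the paper handles via the pictorial argument of Figure~\ref{fig:change}, and your Cauchy-interlacing case analysis is what the paper abbreviates as ``It easily follows that $|\sigma_L-\sigma_{L'}|+|\eta_L-\eta_{L'}|=1$'' (note that your four cases in fact yield equality, not merely $\le 1$, matching the paper).
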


\begin{proof}
Let~$L'$ be a colored link obtained from~$L$ by a single crossing change involving sublinks of different colors. Then, a~$C$-complex~$S'$ for~$L'$ can be obtained from a~$C$-complex~$S$ for~$L$ by adding a clasp intersection, as illustrated in Figure~\ref{fig:change}. Since~$S$ is connected, it follows that~$H_1(S')=H_1(S)\oplus\mathbb{Z}[\gamma]$ for some loop~$\gamma$ passing through the additional clasp.
With respect to this choice of bases, the resulting Hermitian matrices can be written as
\[
H'(\omega)=  
\begin{bmatrix}
H(\omega) & z \\
\overline{z}^T &  \lambda
\end{bmatrix}
\]
for some vector~$z$ and real number~$\lambda$. It easily follows that
\[
\left| \sigma_{L}(\omega)-\sigma_{L'}(\omega) \right| +\left| \eta_{L}(\omega)-\eta_{L'}(\omega) \right|=1
\]
for all~$\omega\in\mathbb{T}^\mu_*$. Consequently if~$L=L^{(0)},L^{(1)},\dots,L^{(s)}=L_1 \sqcup \dots \sqcup L_\mu $ is a splitting sequence which realizes the splitting number, then
\begin{align*}
\sp(L)&=\sum_{i=1}^{s} \left( \left| \sigma_{L^{(i-1)}}(\omega)-\sigma_{L^{(i)}}(\omega) \right|+ \left| \eta_{L^{(i-1)}}(\omega)-\eta_{L^{(i)}}(\omega) \right| \right)\\
	&\ge \left| \sigma_L(\omega)-\sigma_{L^{(s)}}(\omega) \right|+ \left| \eta_{L}(\omega)-\eta_{L^{(s)}}(\omega) \right|\,.
\end{align*}
Since~$L^{(s)}$ is the split union of its components, the first point of Proposition~\ref{prop:properties} gives
\[
\sigma_{L^{(s)}}(\omega)=\sum_{i=1}^\mu \sigma_{L_i}(\omega_i)\quad\text{and}\quad\eta_{L^{(s)}}(\omega)=\sum_{i=1}^\mu \eta_{L_i}(\omega_i)+\mu-1\,,
\]
which completes the proof.
\end{proof}

\begin{figure}[Htb]
\labellist\small\hair 2.5pt
\pinlabel {$L_i$} at -7 370
\pinlabel {$L_j$} at 158 370
\pinlabel {$L'_i$} at 315 370
\pinlabel {$L'_j$} at 493 370
\pinlabel {$S_i$} at -15 176
\pinlabel {$S_j$} at 170 176
\pinlabel {$S'_i$} at 250 176
\pinlabel {$S'_j$} at 560 176
\endlabellist
\centerline{\psfig{file=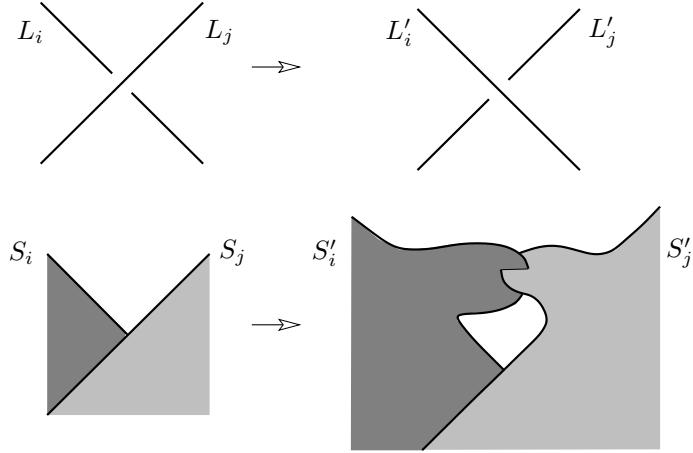,height=6cm}}
\caption{A crossing change resulting in the addition of a clasp intersection.}
\label{fig:change}
\end{figure}

\begin{remark}
The proof of Theorem~\ref{thm:splitting} has an interesting consequence. Namely, assume that~$L=L^{(0)},L^{(1)},\dots,L^{(s)}=L_1\sqcup\dots\sqcup L_\mu $ is a splitting sequence
for which there exists~$\omega\in\mathbb{T}_*^\mu$ and~$\varepsilon=\pm 1$ such that the following holds: for each~$i$,
either~$\sigma_{L^{(i+1)}}(\omega)=\sigma_{L^{(i)}}(\omega)+\varepsilon$, or~$\eta_{L^{(i+1)}}(\omega)=\eta_{L^{(i)}}(\omega)+1$. Then, this splitting sequence is minimal. This proof also shows that our bound has the same parity as the splitting number.
\end{remark}

\begin{remark}
In the case of an ordered link~$L$ and for generic~$\omega$, Theorem~\ref{thm:splitting} can also be proved by using~$4$-dimensional considerations. Indeed,
a splitting sequence of length~$s$ provides an immersed concordance in~$S^3\times[0,1]$ between~$L$ and the corresponding split link, with~$s$ transverse double points.
Removing~$B^3\times[0,1]$, where~$B^3$ is a~$3$-ball in~$S^3$ that meets each component of~$L$ in a small unknotted arc untouched by the crossing changes,
we obtain a collection of discs in~$B^4$ with~$s$ double points, bounding the link~$L\#\overline{L_1}\#\dots\#\overline{L_\mu}$. The result then follows from Theorem~7.2,
Proposition~2.10 and Proposition~2.12 of~\cite{CF}.
\end{remark}

As an immediately corollary of Theorem~\ref{thm:splitting} and the second point of Proposition~\ref{prop:properties}, we obtain the following lower bound
for~$\sp(L)$ in terms of the Levine-Tristram signature and nullity of~$L$.

\begin{corollary}
\label{cor:splitting}
If~$L=L_1 \cup \dots \cup L_\mu$ is a~$\mu$-component oriented link, then
\[
\Big|\sigma_L(\omega)+\sum_{i<j}\ell k(L_i,L_j)-\sum_{i=1}^\mu \sigma_{L_i}(\omega)\Big|+\Big|\mu-1-\eta_L(\omega)+\sum_{i=1}^\mu \eta_{L_i}(\omega)\Big|\leq\sp(L)
\]
for all~$\omega\in S^1\setminus\lbrace 1 \rbrace$.\qed
\end{corollary}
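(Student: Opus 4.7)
The plan is to derive this as a direct specialization of Theorem~\ref{thm:splitting}. First, I would promote the~$\mu$-component oriented link~$L$ to a~$\mu$-colored link by assigning a distinct color to each of its components. Under this coloring, the colored splitting number (the minimum number of crossing changes between sublinks of \emph{different} colors) coincides with the ordinary splitting number~$\sp(L)$ from the introduction, since every pair of components now automatically involves distinct sublinks.

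Next, I would apply Theorem~\ref{thm:splitting} at the diagonal point~$(\omega,\dots,\omega)\in\mathbb{T}_*^\mu$ for~$\omega\in S^1\setminus\{1\}$. The translation from the multivariable invariants to the Levine-Tristram invariants is handled entirely by part~(ii) of Proposition~\ref{prop:properties}, which gives
\[
\sigma_L(\omega,\dots,\omega)=\sigma_L(\omega)+\sum_{i<j}\ell k(L_i,L_j)\quad\text{and}\quad\eta_L(\omega,\dots,\omega)=\eta_L(\omega).
\]
Since each sublink~$L_i$ is a single oriented knot under the chosen coloring, its colored signature and nullity (which are one-variable in this case) agree tautologically with its Levine-Tristram signature and nullity. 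Substituting these identities into the bound supplied by Theorem~\ref{thm:splitting} produces exactly the inequality stated in the corollary.

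I do not anticipate any genuine obstacle: all of the analytic content already resides in Theorem~\ref{thm:splitting}, and the reduction is a purely notational change of variables dictated by Proposition~\ref{prop:properties}(ii). The only point worth verifying explicitly is that the colored splitting number for the finest coloring matches~$\sp(L)$, which is immediate from the definition of~$\sp$ recalled at the start of Section~\ref{sec:proof}.
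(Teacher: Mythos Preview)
Your proposal is correct and matches the paper's approach exactly: the paper states the corollary as an immediate consequence of Theorem~\ref{thm:splitting} and Proposition~\ref{prop:properties}(ii), with no additional argument beyond what you describe. The only thing the paper adds afterward is the remark that varying orientations corresponds to evaluating along the $2^{\mu-1}$ diagonals of $\mathbb{T}^\mu_*$, via Proposition~\ref{prop:properties}(iii).
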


Note that there are~$2^\mu$ choices of orientations for the components of~$L$, which give~$2^{\mu-1}$ lower bounds on~$\sp(L)$.
By the third point of Proposition~\ref{prop:properties}, these correspond to the value of the multivariable invariants
on the~$2^{\mu-1}$ diagonals of~$\mathbb{T}^\mu_*\simeq(0,2\pi)^\mu$.

Our next corollary should be compared to~\cite[Corollary 4.3]{BFP}. The first inequality is identical; in view of~Proposition~\ref{prop:nullity}~{\it (ii)},
the case~$\beta(L)=0$ of the second statement can be understood as the signature analogue of the Alexander polynomial obstruction.

\begin{corollary}
\label{cor:obstruction}
Let $L=L_1 \cup \dots \cup L_\mu$ be a $\mu$-colored link,~$\beta(L)$ the rank of its Alexander module, and~$\Delta^{\mathrm{tor}}_L$ its first non-vanishing Alexander polynomial.
Then, one has
\[
\mu-1-\beta(L) \leq \sp(L)\,.
\]
Furthermore, if~$\mu-1-\beta(L)=\sp(L)$, then
\[
\sigma_L(\omega_1,\dots,\omega_\mu)= \sum_{i=1}^\mu \sigma_{L_i}(\omega_i)\quad\text{and}\quad\eta_{L_1}(\omega_1)=\dots=\eta_{L_\mu}(\omega_\mu)=0
\]
for all~$\omega=(\omega_1,\dots,\omega_\mu)\in\mathbb{T}^\mu$ such that~$\Delta^{\mathrm{tor}}_L(\omega)\neq 0$.
\end{corollary}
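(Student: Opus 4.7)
The plan is to apply Theorem~\ref{thm:splitting} at a point $\omega\in\mathbb{T}^\mu_*$ where the multivariable nullity attains its minimum, and then to saturate the resulting inequality under the hypothesized equality.

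For the first assertion $\mu-1-\beta(L)\le\sp(L)$, if $\beta(L)\ge\mu-1$ the bound is automatic, so assume otherwise. By Proposition~\ref{prop:nullity}~(i), there is some $\omega\in\mathbb{T}^\mu_*$ with $\eta_L(\omega)=\beta(L)$. At such a point, the quantity inside the second absolute value of Theorem~\ref{thm:splitting} simplifies to $\mu-1-\beta(L)+\sum_i\eta_{L_i}(\omega_i)$, which is nonnegative and at least $\mu-1-\beta(L)$; discarding the nonnegative signature term yields the desired bound.

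Now assume $\sp(L)=\mu-1-\beta(L)$. At any $\omega\in\mathbb{T}^\mu_*$ with $\eta_L(\omega)=\beta(L)$, the chain of inequalities above must be an equality, forcing simultaneously $|\sigma_L(\omega)-\sum_i\sigma_{L_i}(\omega_i)|=0$ and $\sum_i\eta_{L_i}(\omega_i)=0$. Nonnegativity of each $\eta_{L_i}$ then delivers the two stated equalities at every $\omega$ in the set $\{\eta_L=\beta(L)\}$.

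The main obstacle is upgrading the domain of validity from $\{\eta_L=\beta(L)\}$ to the larger set $\{\Delta^{\mathrm{tor}}_L\ne 0\}$ stated in the corollary. The easy inclusion $\{\eta_L=\beta(L)\}\subseteq\{\Delta^{\mathrm{tor}}_L\ne 0\}$ holds because every nonvanishing $(m-\beta(L))$-minor of a presentation matrix of $H_1(\widetilde{X}_L)$ is divisible by $\Delta^{\mathrm{tor}}_L$. For the reverse direction, I would argue that the two sets differ only on a subvariety of codimension at least two in $\mathbb{T}^\mu_*$, since the codimension-one stratum of $\Sigma_{\beta(L)+1}=\{\eta_L>\beta(L)\}$ is cut out precisely by $\Delta^{\mathrm{tor}}_L$ (the leftover minor factors have trivial gcd, so their common zero locus has codimension at least two). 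The equalities are then extended across this exceptional codimension-two locus by the local constancy of $\sigma_L$ and $\eta_{L_i}$ away from codimension-one zero sets, in the spirit of Proposition~\ref{prop:nullity}~(ii) and the analytic arguments of~\cite{CF}. This continuity/density step is where I expect the main technical effort.
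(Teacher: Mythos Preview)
Your chain of inequalities is exactly the paper's: drop the signature term, drop the nonnegative $\sum_i\eta_{L_i}(\omega_i)$, and then replace $\eta_L(\omega)$ by its minimum $\beta(L)$ via Proposition~\ref{prop:nullity}~(i). Likewise, your saturation argument in the equality case---forcing each inequality to be an equality, hence $\sigma_L(\omega)=\sum_i\sigma_{L_i}(\omega_i)$ and all $\eta_{L_i}(\omega_i)=0$---is identical to the paper's.

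Where you diverge from the paper is in the last paragraph, and here you have made the problem harder than the paper does. You establish the equalities on $\{\eta_L=\beta(L)\}$, note the inclusion $\{\eta_L=\beta(L)\}\subseteq\{\Delta^{\mathrm{tor}}_L\neq 0\}$, and then try to \emph{extend outward} to the larger set by a codimension-two density argument. The paper goes the other way: it asserts the \emph{reverse} inclusion, namely that $\Delta^{\mathrm{tor}}_L(\omega)\neq 0$ already forces $\omega\in\Sigma_{\beta(L)}\setminus\Sigma_{\beta(L)+1}$, hence $\eta_L(\omega)=\beta(L)$ by \cite[Theorem~4.1]{CF}. With that inclusion in hand, no extension or continuity argument is needed at all: the saturation step applies directly at every $\omega$ with $\Delta^{\mathrm{tor}}_L(\omega)\neq 0$.

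Two comments on your proposed extension. First, the inclusion you call ``easy'' is indeed the easy one, but it is not the one you need; you need $\{\Delta^{\mathrm{tor}}_L\neq 0\}\subseteq\{\eta_L=\beta(L)\}$ to restrict, or a genuine extension argument to enlarge. Second, your continuity sketch is not airtight as written: $\sigma_L$ can jump precisely where $\eta_L$ jumps, so knowing that the bad locus has high codimension does not by itself propagate the signature equality across it. If you want to salvage this route you would have to argue more carefully, but the paper's route sidesteps the issue entirely by working with the inclusion in the opposite direction.
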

\begin{proof}
By Theorem~\ref{thm:splitting} and the first point of Proposition~\ref{prop:nullity}, we have the inequalities
\begin{align*}
\sp(L)&\ge\Big|\sigma_L(\omega)-\sum_{i=1}^\mu \sigma_{L_i}(\omega_i)\Big|+\Big|\mu-1-\eta_L(\omega)+\sum_{i=1}^\mu\eta_{L_i}(\omega_i)\Big|\\
	&\ge\mu-1-\eta_L(\omega)+\sum_{i=1}^\mu\eta_{L_i}(\omega_i)\\
	&\ge\mu-1-\eta_L(\omega)\\
	&\ge\mu-1-\beta(L)\,.
\end{align*}
Let us now assume that~$\Delta^{\mathrm{tor}}_L(\omega)\neq 0$. Using the notations of the proof of Proposition~\ref{prop:nullity}, this implies that~$\omega$ belongs
to~$\Sigma_{\beta(L)}\setminus\Sigma_{\beta(L)+1}$. By~\cite[Theorem~4.1]{CF}, this means that~$\eta_L(\omega)=\beta(L)$. The second statement now follows
by setting equalities for each of the inequalities displayed above.
\end{proof}

\begin{remark}
\label{rem:Ccomplex}
In the 2-component and~$\beta(L)=0$ case, one can also show Corollary~\ref{cor:obstruction} using other considerations.
Indeed, it is known that if a 2-component link has splitting number one, then it consists of a {\em band clasping\/} of two knots (see the proof of Theorem~1 in~\cite{Kondo}).
A careful analysis of the situation using~$C$-complexes then yields the result. As the Alexander polynomial may be computed via~$C$-complexes~\cite{CimConway},
this alternative method also enables one to recover the Alexander obstruction in the 2-component case (and in particular~\cite[Theorem $4.2$]{CFP}).
\end{remark}

\begin{remark}
\label{rem:splice}
As shown by Degtyarev, Florens and Lecuona~\cite{DFL}, the multivariable signature behaves in a controlled way via the operation known as {\em splicing\/}, which generalizes
the satellite, infection, and cabling operations. Therefore, Theorem~\ref{thm:splitting} together with~\cite[Theorem~2.2]{DFL} immediately yields a lower bound for the splitting
number of the splice of two links in terms of the signature of its splice components. In particular, this gives a lower bound for the splitting number of the
cabling of a link~$L$ (along one or several of its components) in terms of the signature of~$L$ and of the corresponding torus links. However, we have not been able to find compelling applications
of this result, and therefore do not discuss it in more detail.
\end{remark}

Finally note that all the techniques developed in this paragraph can also be used to obtain lower bounds on the unlinking number~$u(L)$ of a link~$L$. For instance,
if $L=L_1 \cup \dots \cup L_\mu$ is a~$\mu$-component link, then one obtains
\[
\left| \sigma_L(\omega_1,\dots,\omega_\mu)\right| +\left|\mu-1-\eta_L(\omega_1,\dots,\omega_\mu) \right|+\sum_{i<j} \left|\ell k(L_i,L_j)\right| \leq 2u(L)
\]
for all~$(\omega_1,\dots,\omega_\mu)\in\mathbb{T}^\mu_*$. Unfortunately, this bound is not very powerful so we shall not discuss it any further.

\section{Examples and applications}

In this section, we use our bound on three families of examples: the links with at most nine crossings, the links of the Batson-Seed list, and~$2$-bridge links.
All the diagrams are taken from SnapPy~\cite{SnapPy}.

\medskip

As stated above, we first tested Theorem~\ref{thm:splitting} on all 130 prime links with fewer than ten crossings, using the notations and data from LinkInfo~\cite{LinkInfo}.
In 125 cases, the Levine-Tristram bound of Corollary~\ref{cor:splitting} is enough to recover the splitting number, while Corollary~\ref{cor:obstruction} recovers two additional cases. The three remaining links are~$L9a47$ and~$L9n27$ (for which the linking number bound is sharp) and~$L8a9$ (whose splitting number can be recovered by the Alexander polynomial obstruction). Let us illustrate this with a couple of examples.

\begin{figure}[!htbp]
\centerline{\psfig{file=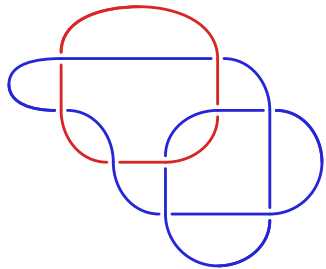,height=3.5cm}}
\caption{The link L9a29.}
\label{fig:L9a29}
\end{figure}

\begin{example}
\label{ex:L9a29}
The splitting number of the link~$L=L9a29$ depicted in Figure~\ref{fig:L9a29} was shown to be~$3$ in~\cite[Section~$4.2$]{CFP} by using the Alexander polynomial obstruction.
Orienting~$L$ so that it consists of a right-handed trefoil~$L_1$ and a trivial knot~$L_2$ with linking number~$ \ell k(L_1,L_2)=-1$, we have
\[
\sigma_L(-1)=5, \ \ \ \eta_L(-1)=0, \ \ \ \sigma_{L_1}(-1)=2, \ \ \ \eta_{L_1}(-1)=0, \ \ \ \sigma_{L_2}(-1)=0, \ \ \ \eta_{L_2}(-1)=0\,.\]
It follows that the bound
\[
\left|\sigma_L(-1)+\ell k(L_1,L_2)-\sigma_{L_1}(-1)-\sigma_{L_2}(-1)\right|+|2-1-\eta_L(-1)+\eta_{L_1}(-1)+\eta_{L_2}(-1)| =3
\]
of Corollary \ref{cor:splitting} is sharp.
\end{example}

\begin{example}
\label{ex:L9a24}
The splitting number of the link~$L=L9a24$ depicted on the left hand side of Figure~\ref{fig:L9a24maple} was shown to be~$3$ in~\cite[Section~$6$]{CFP} by using the Alexander polynomial obstruction. The lower bound of Corollary~\ref{cor:splitting} gives~$1$, so it is not sufficient to conclude. However, Corollary \ref{cor:obstruction} shows that the splitting number must be greater than one, and therefore equal to three due to the parity of the linking number. Indeed, one does not always
have~$\sigma_L(\omega_1,\omega_2)=\sigma_{L_1}(\omega_1)+\sigma_{L_2}(\omega_2)$, as shown by the zero locus of~$\Delta_L$ illustrated on the right hand side of
Figure~\ref{fig:L9a24maple}. (Recall Proposition~\ref{prop:nullity} {\it (ii)}.)

The exact same analysis shows that the splitting number of~$L9n17$ is three.
\begin{figure}[!htbp]
\captionsetup[subfigure]{position=below,justification=justified,singlelinecheck=false,labelfont=bf}
\begin{subfigure}[t]{.30\textwidth}
\psfig{file=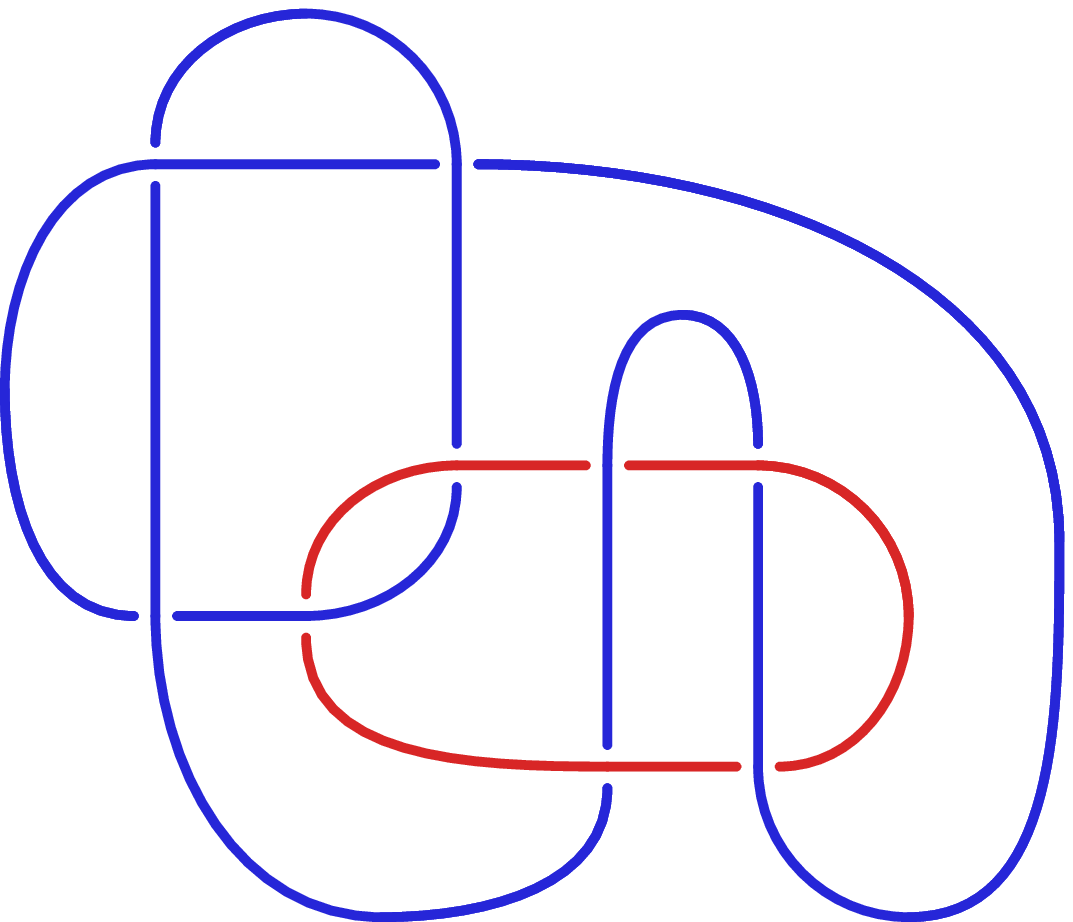,width=\linewidth}
%\caption{The link L9a24.}
\label{fig:L9a24}
\end{subfigure}
 \hspace{3cm}
\begin{subfigure}[t]{.30\textwidth}
\centering{\psfig{file=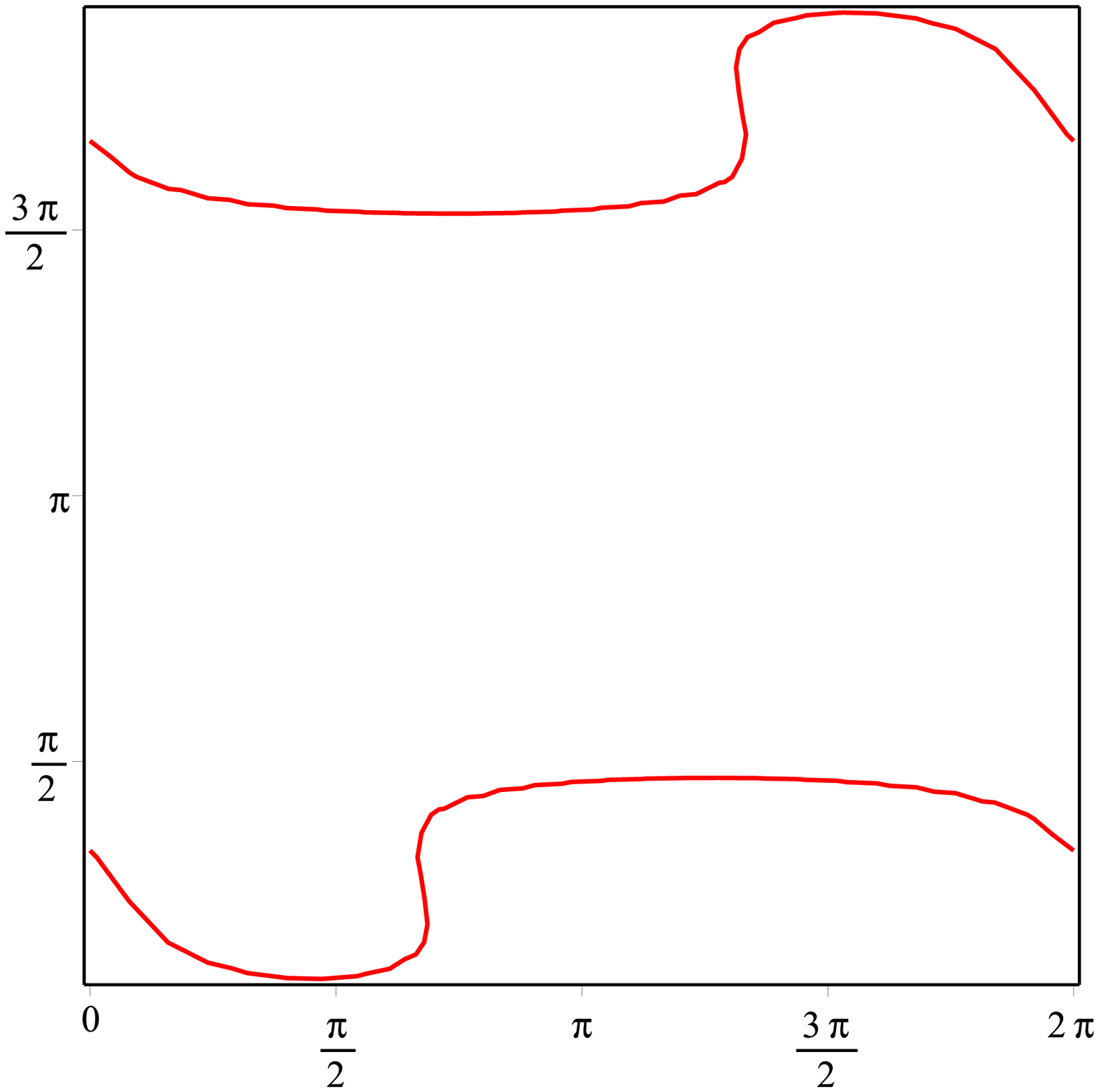,width=\linewidth}}
%\caption{The profile of the signature}
\label{fig:maple}
\end{subfigure}
\caption{The link $L9a24$ and the intersection of the zero locus of its Alexander polynomial with $\mathbb{T}^2_*$.}
\label{fig:L9a24maple}
\end{figure}
\end{example}

Next, we tested our bound on the 17 links of the Batson-Seed list, using Seifert matrix data kindly provided by J.C.~Cha.
Among these, there are seven~$12$-crossing links (namely,~$L12n1342, L12n1350, L12n1357, L12n1363, L12n1367, L12n1274$ and~$L12n1404$) for which both components are trefoils, and whose splitting number was shown to be equal to~$3$ by Batson and Seed. Cha, Friedl and Powell recovered these results via the Alexander obstruction. Applying Corollary~\ref{cor:obstruction} as in Example~\ref{ex:L9a24} immediately gives the same conclusion. Moreover, the splitting number of~$L12n1342, L12n1350, L12n1367$ and~$L12n1274$ can be recovered by using the Levine-Tristram signature and Corollary~\ref{cor:splitting} alone. Let us illustrate this with one example.

\begin{example}
Orient the link~$L=L12n1367$ (depicted in Figure \ref{fig:L12n1367}) so that~$\ell k(L_1,L_2)=1$ and set~$\omega=e^{\frac{i \pi}{3}}$. Using 
\[
\sigma_L(\omega)=0, \ \ \ \sigma_{L_1}(\omega)=1, \ \ \ \sigma_{L_2}(\omega)=-1, \ \ \ \eta_{L}(\omega)=\eta_{L_1}(\omega)=\eta_{L_2}(\omega)=1\,,
\]
it follows that the bound
\[
\left| \sigma_L(\omega)+\ell k(L_1,L_2)-\sigma_{L_1}(\omega)-\sigma_{L_2}(\omega)\right|+|2-1-\eta_L(\omega)+\eta_{L_1}(\omega)+\eta_{L_2}(\omega)| =3
\]
of Corollary \ref{cor:splitting} is sharp.
 \begin{figure}[!htbp]
\centerline{\psfig{file=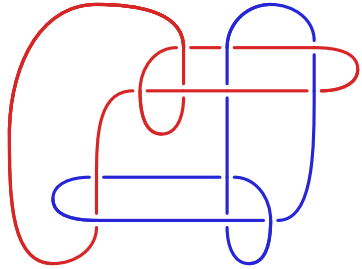,height=3.5cm}}
\caption{The link L12n1367.}
\label{fig:L12n1367}
\end{figure}
\end{example}

For the remaining 10 links in this list, Batson and Seed could not determine whether the splitting number is~$3$ or~$5$. These links having two or three components, the Alexander
polynomial obstruction cannot be applied. However, Cha, Friedl and Powell used various arguments based on covering link techniques to determine these values. As it turns out, our bound allows to easily determine these splitting number for all but one of them, namely~$L12n1321$. Here are several of these examples.

\begin{example}
Consider the link~$L=L11a372$ depicted in Figure~\ref{fig:L11a372}, whose splitting number was shown to be~$5$ in~\cite[Section~$5.2$]{CFP}.
Orient~$L$ so that its trivial components~$L_1,L_2$ satisfy~$ \ell k(L_1,L_2)=-1$. Since~$ \sigma_L(-1)=5$ and~$\eta_L(-1)=0$,
the bound 
\[
\left|\sigma_L(-1)+\ell k(L_1,L_2)\right|+|2-1-\eta_L(-1)| =5
\]
given by the classical signature is enough to conclude.
\begin{figure}[!htbp]
\centerline{\psfig{file=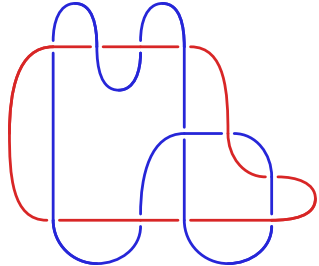,height=3.5cm}}
\caption{The link L11a372.}
\label{fig:L11a372}
\end{figure}
\end{example}

\begin{example}
\label{ex:L12a1622}
It took the whole Section~$5.3$ of~\cite{CFP} to show that the splitting number of the 3-component link~$L=L12a1622$ depicted in Figure \ref{fig:L12a1622} is equal to 5.
Orienting~$L$ so that its trivial components~$L_1,L_2,L_3$ satisfy~$\ell k(L_1,L_2)=0,  \ \ell k(L_1,L_3)=0,  \ \ell k(L_2,L_3)=1$, and
picking~$\omega=e^{\frac{3\pi i}{4}}$, we have~$\sigma_L(\omega)=-4$ and~$\eta_L(\omega)=0$. Hence, the bound 
\[
\left|\sigma_L(\omega)+\ell k(L_2,L_3) \right|+|3-1-\eta_L(\omega)| =5
\]
of Corollary \ref{cor:splitting} immediately provides the desired splitting number.
\begin{figure}[!htbp]
\centerline{\psfig{file=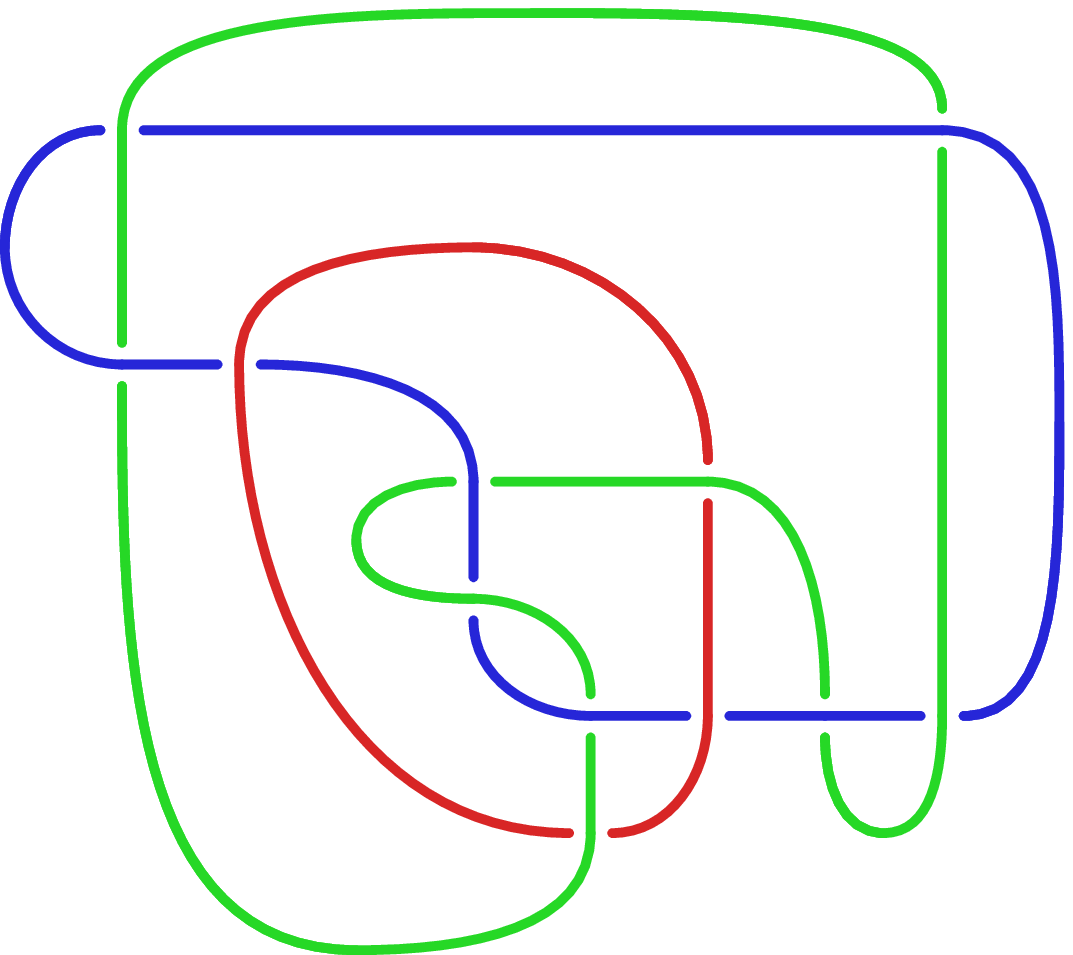,height=3.5cm}}
\caption{The link L12a1622.}
\label{fig:L12a1622}
\end{figure}
\end{example}

\begin{example}
\label{ex:L12n1326}
Consider the link~$L=L12n1326$ depicted in Figure \ref{fig:L12n1326}. Cha-Friedl-Powell~\cite[Section~$5.2$]{CFP} used the twisted Alexander polynomial to compute the slice
genus of a covering link, and concluded that~$\text{sp}(L)=3$. Orienting~$L$ so that its trivial components~$L_1,L_2$ satisfy~$ \ell k(L_1,L_2)=1$, and
picking~$\omega =e^{\frac{\pi i}{5}}$ so that~$\sigma_L(\omega)=1$ and~$\eta_L(\omega)=0$, the bound 
\[
\left|\sigma_L(\omega)+\ell k(L_1,L_2)\right|+|2-1-\eta_L(\omega)| =3
\]
of Corollary~\ref{cor:splitting} immediately provides the desired splitting number.
\begin{figure}[!htbp]
\centerline{\psfig{file=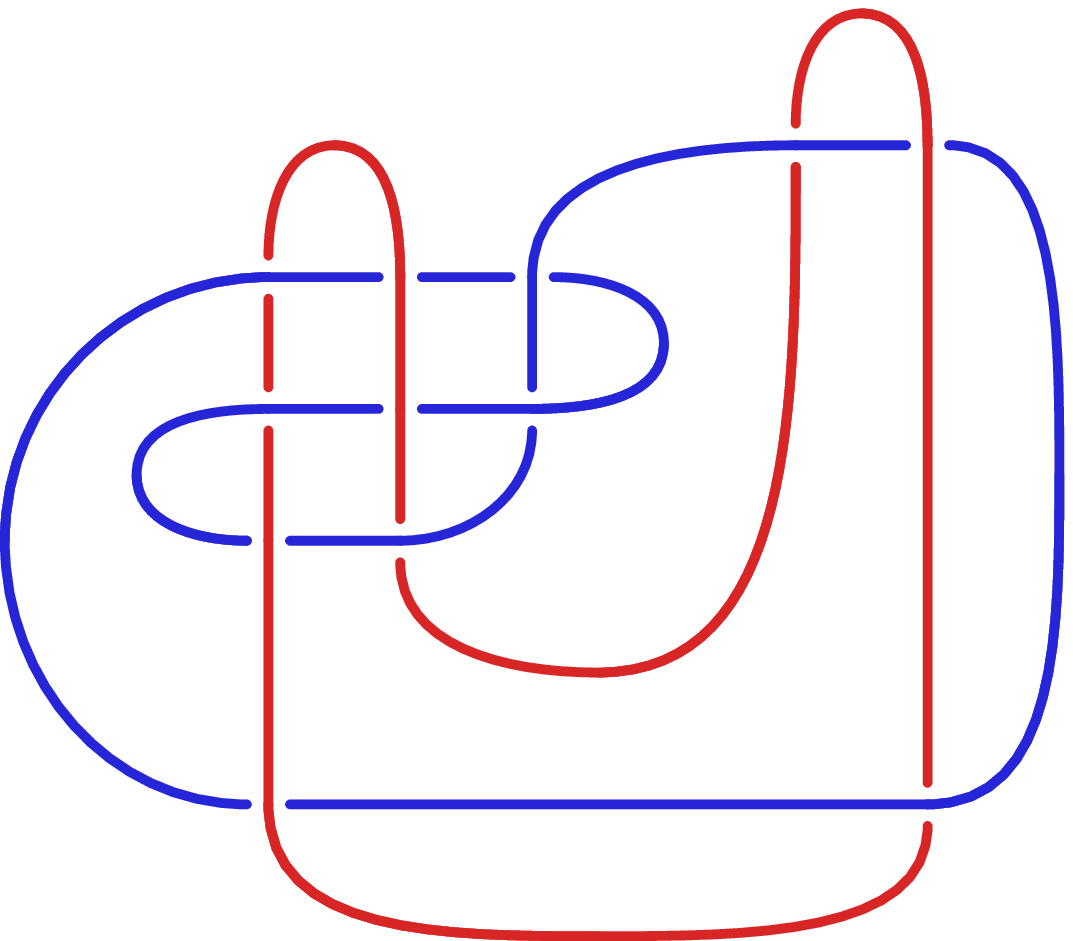,height=3.5cm}}
\caption{The link L12n1326.}
\label{fig:L12n1326}
\end{figure}
\end{example}

In the {\em very\/} recent preprint~\cite{BG}, Borodzik and Gorsky used Heegaard Floer techniques to compute the splitting number of the~$2$-bridge
link with Conway normal form~$C(2a,1,2a)$ (recall Figure~\ref{fig:2bridge} for an explanation of this notation). We conclude this article by showing how easily
we can compute the splitting number of a much wider class of~$2$-bridge links.

\begin{theorem}
\label{thm:2bridge}
For any~$n\ge 1$ and any positive integers~$a_1,\dots,a_n,b_1,\dots,b_{n-1}$, the splitting number of the~$2$-bridge link~$C(2a_1,b_1,2a_2,b_2,\dots,2a_{n-1},b_{n-1},2a_n)$
is equal to~$a_1+\dots+a_n$.
\end{theorem}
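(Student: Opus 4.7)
The proof splits into the upper bound $\sp(L)\le a_1+\dots+a_n$ and the matching lower bound. The upper bound is immediate from inspection of the standard diagram of $L=C(2a_1,b_1,\dots,2a_n)$: the $n$ regions of $2a_i$ crossings are precisely where the two components meet, and $a_i$ crossing changes within the $i$-th such region turn it (after isotopy) into a trivial tangle, splitting the link after $\sum_ia_i$ changes in total.

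For the lower bound, both components of a $2$-bridge link are unknotted, so Theorem~\ref{thm:splitting} reduces to
\[
|\sigma_L(\omega_1,\omega_2)|+|1-\eta_L(\omega_1,\omega_2)|\le\sp(L)
\]
for all $(\omega_1,\omega_2)\in\mathbb{T}^2_*$, and the task is to find a point at which the left-hand side equals $a_1+\dots+a_n$. The plan is to evaluate at $(\omega_1,\omega_2)=(-1,-1)$: there the four prefactors $1-(-1)^{\pm 1}$ all equal $2$, and the Hermitian form collapses to
\[
H(-1,-1)=4\bigl(A^{++}+A^{+-}+(A^{+-})^T+(A^{++})^T\bigr).
\]

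To make this explicit, I would generalize the $C$-complex of Example~\ref{ex:2bridge}: two surfaces $S_1,S_2$ interacting through the clasps produced by the successive $2a_i$-blocks, with one of them carrying bands that record the $b_i$ self-twists of the corresponding component. In a basis of $H_1(S)$ adapted to this block decomposition, the Seifert matrices $A^{\epsilon_1\epsilon_2}$ become banded matrices whose diagonal entries are governed by the $a_i$'s and whose off-diagonal entries are governed by the $b_i$'s. The targeted claim is
\[
\sigma_L(-1,-1)=1-(a_1+\dots+a_n)\qquad\text{and}\qquad\eta_L(-1,-1)=0,
\]
which matches the cases $n=1$ (the torus link $T(2,2a)$, for which $\sigma_L(-1,-1)=1-a$) and $(a_1,b_1,a_2)=(2,3,1)$ of Example~\ref{ex:2bridge} (for which $\sigma_L(-1,-1)=-2$).

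The main obstacle is carrying out this signature computation for general $n$, and in particular explaining its striking independence from the $b_i$'s: the matrix $H(-1,-1)$ itself depends on them, even though the splitting number does not. I expect this to follow from a simultaneous row-and-column congruence of $H(-1,-1)$ that decouples the $n$ blocks of the Conway form, absorbing the $b_i$-dependent off-diagonal couplings into directions of the quadratic form that contribute neither to the signature nor to the nullity. What remains is then an orthogonal sum of $n$ block contributions indexed by the $a_i$'s, whose signatures add to $1-\sum_ia_i$, completing the proof together with the $+1$ contributed by $|1-\eta_L(-1,-1)|$.
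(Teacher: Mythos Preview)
Your overall strategy is right, and in fact identical to the paper's: bound above by inspection, bound below by evaluating the multivariable signature at $(-1,-1)$ and invoking Theorem~\ref{thm:splitting}. The gap is in the execution of the signature computation, and it stems from a suboptimal choice of $C$-complex.

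You propose surfaces ``with one of them carrying bands that record the $b_i$ self-twists,'' and expect the resulting matrix to have $a_i$-dependent diagonal and $b_i$-dependent off-diagonal entries, requiring a congruence argument to decouple the blocks. But the $C$-complex of Example~\ref{ex:2bridge} that you want to generalize has no bands: it consists of two \emph{discs} meeting in $s=a_1+\dots+a_n$ clasps, and this works for arbitrary $n$. With two discs and $s$ clasps, $H_1(S)$ has rank exactly $s-1$, and the resulting matrix at $(-1,-1)$ is the tridiagonal matrix
\[
H(-1,-1)=4\begin{bmatrix}-2d_1&1& & \\ 1&-2d_2&\ddots & \\ &\ddots&\ddots&1\\ & &1&-2d_{s-1}\end{bmatrix}
\]
with off-diagonal entries equal to $1$ and diagonal entries $-2d_i$ for certain positive integers $d_i$ (so your guess about which entries depend on what is inverted). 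The $b_i$'s only enter through the values of the $d_i$, and since any such matrix is negative definite (e.g.\ it is $-2I+N$ plus a negative semidefinite diagonal correction, where $N$ has eigenvalues $2\cos(k\pi/s)<2$), one gets $\sigma_L(-1,-1)=1-s$ and $\eta_L(-1,-1)=0$ immediately, with no block decoupling needed. The independence from the $b_i$'s that you found ``striking'' is thus automatic once the right $C$-complex is chosen.
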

\begin{proof}
Let us denote this link by~$L$. Inspecting the standard diagram given by Conway's normal form, it is clear that~$\sp(L)\le a_1+\dots+a_n=:s$.
Furthermore, as exemplified in Figure~\ref{fig:2bridge},~$L$ admits a natural~$C$-complex~$S$ made of two discs intersecting in~$s$ clasps, yielding a natural basis
for~$H_1(S)$ of cardinality~$s-1$. Computing the associated generalized Seifert matrices leads to
\[
H(-1,-1)=4\begin{bmatrix}-2d_1&1& & & \\ 1&-2d_2&1 & & \\ &1&\ddots&\ddots \\ & & \ddots& &1\\ & & &1&-2d_{s-1}\end{bmatrix}\,,
\]
where the diagonal elements are positive integers whose precise value depends on the integers~$a_i$ and~$b_i$, but will play no role here.
Indeed, all the eigenvalues of a matrix of the form above are negative, so~$\sigma_L(-1,-1)=1-s$ and~$\eta_L(-1,-1)=0$. Since~$L$ has~$\mu=2$ components, both unknotted, the result now follows from Theorem~\ref{thm:splitting}.
\end{proof}

Note that this last result was obtained using the ordinary signature alone. Using the full power of Theorem~\ref{thm:splitting}, it is possible to determine the splitting number
of many other~$2$-bridge links, such as all~$C(a_1,\dots,a_n)$ with~$n\le 3$. The first example of a~$2$-bridge link whose splitting number is unknown to us is~$L=C(4,3,1,3)$:
the linking number vanishes and our bound reads~$\sp(L)\ge 2$, while the ``obvious'' splitting sequence has length~$4$.

\bibliographystyle{plain}
\nocite{*}
\bibliography{BiblioSplitting}

\end{document}